\def\antiddot{\mathinner{\mkern1mu\raise1pt\vbox{\kern7pt\hbox{.}}\mkern2mu
        \raise4pt\hbox{.}\mkern2mu\raise7pt\hbox{.}\mkern1mu}}
\newcommand{\FF}{{\mathbb F}}
\newcommand{\PP}{{\mathbb P}}
\newcommand{\QQ}{{\mathbb Q}}
\newcommand{\RR}{{\mathbb R}}
\newcommand{\Ext}{{\rm{Ext}}}
\newcommand{\id}{{\rm{id}}}
\newcommand{\punkt}{\hspace{-.3ex}\raise.15ex\hbox to1ex{\Huge.}}
\DeclareMathOperator{\image}{image}
\DeclareMathOperator{\Tor}{Tor}
\DeclareMathOperator{\rank}{rank}
\newtheorem{theorem}{Theorem}[section]
\newtheorem{proposition}[theorem]{Proposition}
\newtheorem{corollary}[theorem]{Corollary}
\theoremstyle{definition}
\newtheorem{remark}[theorem]{Remark}
\newtheorem{example}[theorem]{Example}
\newtheorem{algorithm}[theorem]{Algorithm}
\def\PP{{\mathbb P}}
\def\RR{{\mathbb R}}
\def\QQ{{\mathbb Q}}
\def\FF{{\mathbb F}}
\def\image{{\rm image\,}}
\def\bR{{\mathbb R}}
\def\Ddots{\mathinner{\mkern1mu\raise\p@
\vbox{\kern7\p@\hbox{.}}\mkern2mu
\raise4\p@\hbox{.}\mkern2mu\raise7\p@\hbox{.}\mkern1mu}}
\def\Ddots{\mathinner{\mkern1mu\raise\p@
\vbox{\kern7\p@\hbox{.}}\mkern2mu
\raise4\p@\hbox{.}\mkern2mu\raise7\p@\hbox{.}\mkern1mu}}
\newdimen\x \x=12pt
\date{May 19, 2018}
\title{Singular value decomposition of complexes}
\author{Danielle A. Brake, Jonathan D. Hauenstein, Frank-Olaf Schreyer,  \\ Andrew J. Sommese, and Michael E. Stillman}
\begin{document}

\maketitle

\begin{abstract}
\noindent Singular value decompositions of matrices
are widely used in numerical linear algebra
with many applications.  In this paper,
we extend the notion of singular value decompositions
to finite complexes of real vector spaces.
We provide two methods to compute them
and present several applications.
\end{abstract}

\section{Introduction}

For a matrix $A\in\bR^{m\times k}$, a singular
value decomposition (SVD) of $A$ is
$$A = U\Sigma V^t$$
where $U\in\bR^{m\times m}$ and $V\in\bR^{k\times k}$
are orthogonal and $\Sigma\in\bR^{m\times k}$
is diagonal with nonnegative real numbers on the diagonal.  The diagonal entries of $\Sigma$,
say $\sigma_1 \geq \cdots \geq \sigma_{\min\{m,k\}} \geq 0$
are called the singular values of $A$
and the number of nonzero singular values is equal
to the rank of $A$.
Many problems in numerical linear algebra
can be solved using a singular value decomposition
such as pseudoinversion,
least squares solving,
and low-rank matrix approximation.

A matrix $A\in\bR^{m\times k}$ defines
a linear map $A:\bR^k\rightarrow\bR^m$
via $x\mapsto Ax$ denoted
$$\bR^m \lTo^{A} \bR^k.$$
Hence, matrix multiplication simply
corresponds to function composition.
For example, if $B\in\bR^{\ell\times m}$, then
$B\circ A:\bR^k\rightarrow\bR^\ell$ is defined
by $x\mapsto BAx$ denoted
$$\bR^\ell \lTo^{B} \bR^m \lTo^{A} \bR^k.$$
If $B\circ A = 0$, then this composition
forms a {\em complex} denoted
$$0 \lTo \bR^\ell \lTo^{B} \bR^m \lTo^{A} \bR^k \lTo 0.$$
In general, a finite complex of finite dimensional
$\RR$-vector spaces
$$
0 \lTo C_0 \lTo^{A_1} C_{1} \lTo^{A_{2}} \ldots \lTo^{A_{n-1}} C_{n-1} \lTo^{A_n} C_n \lTo 0
$$
consists of vector spaces  $C_i \cong \RR^{c_i}$
and differentials given by matrices $A_{i}$
so that $A_{i}\circ A_{i+1} = 0$.
We denote such a complex by~$C_\bullet$
and its $i^{\rm th}$ homology group as
$$
H_i=H_i(C_\bullet) = \frac{ \ker A_i}{\image A_{i+1}}
$$
with $h_i = \dim H_i$.  Complexes are standard tools that occur in many
areas of mathematics including differential equations, e.g., \cite{AFW06,AFW10}.
One of the reasons for developing a singular
value decomposition of complexes
is to compute the dimensions $h_i$
efficiently and robustly via numerical methods
when each $A_i$ is only known approximately, say $B_i$.
For example, if we know that $\rank A_i = r_i$,
then $h_i$ could easily be computed via
$$ c_i=r_{i}+r_{i+1}+h_i.$$
One option would be to compute
the singular value decomposition of each
$B_i$ in order to compute the rank $r_i$
of $A_i$ since the singular value decomposition
is an excellent rank-revealing numerical method.
However, simply decomposing each~$B_i$
ignores the important information that the
underlying matrices $A_i$ form~a~complex.

The key point of this paper is that
we can utilize information about the
complex to provide more specific information
that reflects the structure it imposes.

\begin{theorem}[Singular value decomposition of complexes]\label{main thm} Let $A_1,\ldots,A_n$ be a sequence of matrices $A_i \in \RR^{c_{i-1}\times c_i}$ which define a complex $C_\bullet$, i.e.
\mbox{$A_{i}\circ A_{i+1} = 0$}.
Let $r_i = \rank A_i$ and $h_i = c_i - (r_i + r_{i+1})$.
Then, there exists sequences $U_0,\ldots,U_n$
and $\Sigma_1,\dots,\Sigma_n$ of orthogonal
and diagonal matrices, respectively, such that
$$
U_{i-1}^t\circ A_i \circ U_i = \bordermatrix{
& r_{i} & r_{i+1} & h_i \cr
r_{i-1} & 0 & 0& 0 \cr
r_{i} & \Sigma_i  & 0 & 0 \cr
h_{i-1} & 0 & 0 & 0 \cr
} := \overline{\Sigma}_i
$$
where all diagonal entries of $\Sigma_i$ are
strictly positive.
Moreover, if every $r_i >0$ and at least one
$h_i >0$, then the
orthogonal matrices $U_i$
can be chosen such that $\det U_i = 1$,
i.e., each $U_i$ is a special orthogonal matrix.
\end{theorem}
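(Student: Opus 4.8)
The plan is to build the orthonormal bases out of the kernel--image--harmonic decomposition of each $C_i$ and then invoke the ordinary matrix SVD one differential at a time; the special‑orthogonal refinement is then a matter of sign bookkeeping.

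First I would record, for each $i$, the orthogonal direct sum decomposition
\[
C_i \;=\; (\ker A_i)^{\perp}\ \oplus\ \image A_{i+1}\ \oplus\ H_i,\qquad H_i:=\ker A_i\cap(\image A_{i+1})^{\perp},
\]
which is legitimate precisely because $A_iA_{i+1}=0$ forces $\image A_{i+1}\subseteq\ker A_i$; here $H_i$ is the harmonic subspace, and the three summands have dimensions $r_i$, $r_{i+1}$, $h_i$ (with the conventions $r_0=r_{n+1}=0$, so the empty blocks at the two ends of the complex cause no trouble). The differential $A_i$ kills $\ker A_i=\image A_{i+1}\oplus H_i$ and restricts to an isomorphism $\phi_i\colon(\ker A_i)^{\perp}\xrightarrow{\ \sim\ }\image A_i\subseteq C_{i-1}$. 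I would then apply the classical SVD to each $\phi_i$, obtaining orthonormal bases $u_{i,1},\dots,u_{i,r_i}$ of $(\ker A_i)^{\perp}$ and $w_{i-1,1},\dots,w_{i-1,r_i}$ of $\image A_i$ together with scalars $\sigma_{i,1}\ge\cdots\ge\sigma_{i,r_i}>0$ satisfying $A_iu_{i,k}=\sigma_{i,k}w_{i-1,k}$. Now assemble $U_i$ as the orthogonal matrix whose columns are, in order, $u_{i,1},\dots,u_{i,r_i}$, then $w_{i,1},\dots,w_{i,r_{i+1}}$ (the SVD \emph{target} basis of $\phi_{i+1}$, which is a basis of $\image A_{i+1}$), then any orthonormal basis of $H_i$, and set $\Sigma_i$ to be the diagonal matrix with entries $\sigma_{i,1},\dots,\sigma_{i,r_i}$. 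Reading $U_{i-1}^{t}A_iU_i$ off in these bases yields exactly $\overline{\Sigma}_i$: its last two column‑blocks lie in $\ker A_i$ and map to $0$, while $A_iu_{i,k}=\sigma_{i,k}w_{i-1,k}$ contributes $\sigma_{i,k}$ in row $r_{i-1}+k$, i.e.\ the block $\Sigma_i$ in the $(2,1)$ position. This establishes the first assertion.

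For the determinant normalization I would use two sign‑change moves that preserve the normal form. Move (a): if $h_i>0$, negate one harmonic basis vector inside $U_i$; this flips $\det U_i$ and nothing else, because that column lies in $\ker A_i$ (so $A_i$ sends it to the zero column) and is orthogonal to $\image A_{i+1}$ (so it never occurs in the expansion of any $w_{i,k}$), hence no $\overline{\Sigma}_j$ changes. Move (b): if $r_i>0$, simultaneously negate $u_{i,1}$ in $U_i$ and $w_{i-1,1}$ in $U_{i-1}$; this flips both $\det U_i$ and $\det U_{i-1}$, and the identity $A_i(-u_{i,1})=\sigma_{i,1}(-w_{i-1,1})$ together with the orthogonality of the three summands of each space shows that $\overline{\Sigma}_{i-1},\overline{\Sigma}_i,\overline{\Sigma}_{i+1}$ are all untouched and $\Sigma_i$ stays positive. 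Encoding the signs as a vector $s\in(\mathbb{Z}/2)^{\,n+1}$ with $s_i=1$ iff $\det U_i=-1$, move (b) for $i=1,\dots,n$ (available since every $r_i>0$) adds $e_{i-1}+e_i$ to $s$, and these $n$ vectors span the even‑weight subspace; so if an even number of the $U_i$ have determinant $-1$ we can clear $s$ entirely with moves of type (b), and if an odd number do, we first apply one move of type (a) at an index with $h_i>0$ (which exists by hypothesis) to correct the parity. This gives $\det U_i=1$ for every $i$.

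The routine part is the first two steps -- they are just the matrix SVD applied to the isomorphisms $\phi_i$, glued block by block along the shared spaces $\image A_{i+1}$. The step needing care, and the one I expect to be the main obstacle, is verifying that the two sign moves really leave \emph{all} the normal forms $\overline{\Sigma}_j$ invariant and act on disjoint columns, so that they may be composed freely; this is where the orthogonality of $(\ker A_i)^{\perp}$, $\image A_{i+1}$, and $H_i$ is essential, and it is also where the degenerate ends $i=0,n$ (with an empty coimage or image block) must be checked separately.
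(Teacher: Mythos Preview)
Your proposal is correct and follows essentially the same route as the paper: both arguments use the orthogonal decomposition $C_i=(\ker A_i)^{\perp}\oplus\image A_{i+1}\oplus H_i$, apply the ordinary SVD to the isomorphism $(\ker A_i)^{\perp}\to\image A_i$, and assemble $U_i$ from the resulting bases together with an arbitrary orthonormal basis of $H_i$. Your treatment of the determinant normalization via the two sign moves and the parity argument in $(\mathbb{Z}/2)^{n+1}$ is in fact more explicit than the paper's, which identifies the same two moves but simply asserts that they give ``enough freedom'' without spelling out the even/odd-weight case distinction.
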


\noindent
The diagonal entries of $\Sigma_1,\dots,\Sigma_n$
are the {\em singular values of the complex}.

We develop
two methods that
utilize the complex structure
to compute a singular
value decomposition of $C_\bullet$.
The successive projection method
described in Algorithm \ref{SuccessiveProjectionMethod} uses the orthogonal projection
$$P_{i-1}\colon C_{i-1} \to \ker A_{i-1} $$
together with the singular value decomposition of $P_{i-1} \circ A_i$.
The second method, described in Algorithm \ref{LaplacianMethod}, is based on the
{\it Laplacians}
$$
\Delta_i = A_i^t \circ A_i + A_{i+1}\circ A_{i+1}^t.
$$
Both methods can be applied to numerical
approximations $B_i$ of $A_i$.

Organization of this paper is as follows.  Section \ref{basics} proves Theorem \ref{main thm} and
collects a number of basic facts
along with defining the pseudoinverse of a complex.
Section \ref{algorithms} describes
the algorithms mentioned above
and illustrates them on an example.
Section \ref{applications} considers
projecting an arbitrary sequence of matrices
onto a complex.
Section~\ref{syzygies} provides
an application to computing
Betti numbers of minimal free resolutions of graded modules over the polynomial ring
$\QQ[x_0,\ldots,x_n]$ which combines our
method with ideas from \cite{EMSS}.

{\bf Acknowledgement.}
DAB and JDH was supported in part by NSF grant ACI-1460032.
JDH was also supported by
Sloan Research Fellowship BR2014-110 TR14.
AJS was supported in part by NSF ACI-1440607.
FOS is grateful to Notre Dame for its hospitality
when developing this project.  This work is a contribution to his Project 1.6 of the SFB-TRR 195 "Symbolic Tools in
Mathematics and their Application" of the German Research Foundation (DFG). MES was supported in part by NSF grant DMS-1502294 and is grateful to Saarland University for its hospitality during a month of intense work on this project.

\section{Basics}\label{basics}

We start with a proof of our main theorem.

{\it Proof of Theorem \ref{main thm}.}
For convenience, we set $A_0 = A_{n+1} = 0$
to compliment $A_1,\dots,A_n$
that describe the complex.
By the homomorphism theorem
$$
(\ker A_i)^\perp \cong \image A_i .
$$
The singular value decomposition
for a complex
follows by applying singular value decomposition
to this isomorphism and extending
an orthonormal basis of these spaces
to orthonormal basis of $\RR^{c_{i-1}}$ and $\RR^{c_i}$.
Since $\image A_{i+1} \subset \ker A_i $ we have an orthogonal direct sum
$$
(\ker A_i)^\perp \oplus \image A_{i+1} \subset \RR^{c_i}
$$
with
$$
H_i := ((\ker A_i)^\perp \oplus \image A_{i+1})^\perp =  \ker A_i \cap  \image A_{i+1}^\perp\cong \frac{ \ker A_i}{\image A_{i+1}}.
$$
With respect to these subspaces, we can decompose $A_i$ as
$$\bordermatrix{
& (\ker A_{i})^\perp&\image A_{i+1}  & H_i \cr
(\ker A_{i-1})^\perp& 0 & 0 & 0 \cr
 \image A_i & \Sigma_i & 0 & 0 \cr
H_{i-1} & 0 & 0 & 0 \cr
}.
$$
Indeed, $A_i$ has no component mapping to  $(\image A_i)^\perp$, which explains six of the zero blocks, and $\ker A_i =( \ker A_i)^{\perp \perp}= \image A_{i+1} \oplus H_i$ explains the remaining two.
Take $U_i$ to be the orthogonal matrix whose column vectors form the orthonormal basis of the spaces
$(\ker A_i)^\perp$ and  $\image A_{i+1}$ induced from the singular value decomposition of $ (\ker A_i)^\perp \to \image A_i$
and $(\ker A_{i+1})^\perp \to \image A_{i+1}$
extended by an orthogonal basis of $H_i$ in the decomposition
 $$(\ker A_i)^\perp \oplus \image A_{i+1} \oplus H_i= \RR^{c_i}.$$
The linear map $A_i$ has, in terms of these bases, the description $U_{i-1}^t\circ A_i \circ U_i$ which has the desired shape.

Finally, to achieve $\det U_i = 1$, we may, for $1 \le k \le r_i$, change signs of the~$k^{\rm th}$ column in $U_i$ and $(r_{i-1}+k)^{\rm th}$ column of $U_{i-1}$ without changing the result of the conjugation. If $h_i>0$, then changing the sign of any of the last $h_i$ columns of~$U_i$ does not affect the result either.
Thus, this gives us enough freedom to reach $\det U_i =1 $ for all $i=0, \ldots, n$.
 \qed

\begin{corollary}[Repetition of eigenvalues]\label{repetition}
Suppose that $A_1, \ldots A_n$ define a complex
with $A_0 = A_{n+1} = 0$.
Let $\Delta_i=A_i^t\circ A_i + A_{i+1} \circ A_{i+1}^t $ be the corresponding Laplacians.
Then, using the orthonormal bases described
by the $U_i$'s from Theorem \ref{main thm}, the Laplacians have the shape
$$
\Delta_i =\bordermatrix{
& r_{i} & r_{i+1} & h_i \cr
r_{i} & \Sigma_{i}^2 & 0 & 0 \cr
r_{i+1} & 0 &\Sigma_{i+1}^2  & 0 \cr
h_i & 0 & 0 & 0 \cr
}.
$$
In particular,
\begin{enumerate}
\item if $r_i=\rank A_i$ and $\sigma^i_1 \ge \sigma^i_2 \ge \ldots \sigma^i_{r_i} >0$ are
 the singular values of $A_i$, then each $(\sigma^i_k)^2$ is both an eigenvalue $\Delta_i$ and $\Delta_{i-1}$;
 \item $\ker \Delta_i = H_i$.
\end{enumerate}
\end{corollary}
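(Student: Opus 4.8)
Everything will follow from a direct computation with the normal form of Theorem~\ref{main thm}, so the plan is first to rewrite $\Delta_i$ in the orthonormal basis given by the columns of $U_i$. From $U_{i-1}^t\circ A_i\circ U_i=\overline{\Sigma}_i$ and the orthogonality of the matrices $U_j$ one gets $A_i\circ U_i=U_{i-1}\circ\overline{\Sigma}_i$, hence $A_i^t=U_i\circ\overline{\Sigma}_i^t\circ U_{i-1}^t$, and likewise $A_{i+1}=U_i\circ\overline{\Sigma}_{i+1}\circ U_{i+1}^t$ so that $A_{i+1}^t=U_{i+1}\circ\overline{\Sigma}_{i+1}^t\circ U_i^t$. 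Substituting these into $\Delta_i=A_i^t\circ A_i+A_{i+1}\circ A_{i+1}^t$ collapses all intermediate orthogonal factors and yields
$$
U_i^t\circ\Delta_i\circ U_i=\overline{\Sigma}_i^t\circ\overline{\Sigma}_i+\overline{\Sigma}_{i+1}\circ\overline{\Sigma}_{i+1}^t .
$$
Thus it suffices to multiply out these two block products.

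The matrix $\overline{\Sigma}_i$ has a single nonzero block, namely the invertible diagonal matrix $\Sigma_i$ occupying the second row strip (of size $r_i$) and the first column strip (of size $r_i$), the latter being the $(\ker A_i)^\perp$ summand of $\RR^{c_i}$. Hence $\overline{\Sigma}_i^t\circ\overline{\Sigma}_i$ is the $c_i\times c_i$ matrix with $\Sigma_i^2$ in the leading $r_i\times r_i$ block and zeros elsewhere. Applying the same description with $i$ replaced by $i+1$, the unique nonzero block of $\overline{\Sigma}_{i+1}$ is $\Sigma_{i+1}$ sitting in the second row strip of $\RR^{c_i}$, of size $r_{i+1}$, which is exactly the $\image A_{i+1}$ summand; therefore $\overline{\Sigma}_{i+1}\circ\overline{\Sigma}_{i+1}^t$ has $\Sigma_{i+1}^2$ in the middle $r_{i+1}\times r_{i+1}$ block and zeros elsewhere. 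Adding the two summands gives precisely the claimed block-diagonal shape of $\Delta_i$ with respect to the decomposition $(\ker A_i)^\perp\oplus\image A_{i+1}\oplus H_i$.

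For the two numbered consequences: since $U_i$ is orthogonal, $\Delta_i$ is orthogonally conjugate to the block-diagonal matrix just obtained, so its spectrum, counted with multiplicity, consists of $(\sigma^i_k)^2$ for $1\le k\le r_i$, the squares $(\sigma^{i+1}_k)^2$ for $1\le k\le r_{i+1}$, and $0$ with multiplicity $h_i$. In particular each $(\sigma^i_k)^2$ is an eigenvalue of $\Delta_i$; running the same normal-form computation one index lower shows $\Delta_{i-1}$ is block-diagonal with blocks $\Sigma_{i-1}^2$ and $\Sigma_i^2$, so $(\sigma^i_k)^2$ is an eigenvalue of $\Delta_{i-1}$ as well, which is~(1). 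For~(2), the invertibility of $\Sigma_i$ and $\Sigma_{i+1}$ forces the kernel of the block-diagonal matrix to be the span of the last $h_i$ coordinate vectors; transporting back by $U_i$, we obtain $\ker\Delta_i$ equal to the span of the last $h_i$ columns of $U_i$, which is the orthonormal basis of $H_i$ chosen in Theorem~\ref{main thm}.

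I do not expect a genuine obstacle: the content is the bookkeeping of the block sizes and the index shift among $\Delta_i$, $\overline{\Sigma}_i$ and $\overline{\Sigma}_{i+1}$. The one point worth double-checking is that the $\Sigma_{i+1}^2$ block really lands in the $\image A_{i+1}$ strip of $\RR^{c_i}$ and not in the $(\ker A_i)^\perp$ strip, since that is exactly what makes the two summands occupy complementary blocks and produce a block-diagonal, rather than merely block-triangular, Laplacian.
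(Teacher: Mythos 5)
Your proof is correct and follows the same route as the paper: the paper's proof is the one-line remark that the block shape of $\Delta_i$ ``follows immediately'' from the normal form of Theorem~\ref{main thm}, and you have simply written out the computation $U_i^t\circ\Delta_i\circ U_i=\overline{\Sigma}_i^t\circ\overline{\Sigma}_i+\overline{\Sigma}_{i+1}\circ\overline{\Sigma}_{i+1}^t$ that this remark leaves implicit, with the block bookkeeping (in particular the placement of $\Sigma_{i+1}^2$ in the $\image A_{i+1}$ strip) done correctly.
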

\begin{proof} The structure of $\Delta_i$
follows immediately from the structure described
in Theorem~\ref{main thm}.
The remaining assertions are immediate consequences. \end{proof}

Let $A_i^+$ denote the Moore-Penrose
pseudoinverse of the $A_i$.
Thus, a singular value decomposition
$$
 A_i  = U_{i-1} \circ \begin{pmatrix}
 0 & 0 & 0 \cr
 \Sigma_i  & 0 & 0 \cr
 0 & 0 & 0 \cr
\end{pmatrix} \circ U_i^t
\,\,\,\,\,\,\mbox{yields}\,\,\,\,\,\,\,
 A_i^+ = U_{i} \circ \begin{pmatrix}
 0 &   \Sigma_i^{-1}  & 0 \cr
0 & 0 & 0 \cr
 0 & 0 & 0 \cr
\end{pmatrix} \circ U_{i-1}^t.
$$

\begin{proposition}\label{projection to homology}
Suppose that $A_1,\ldots, A_n$ define a complex
with $A_0 = A_{n+1} = 0$.
Then, $A_{i+1}^+ \circ A_i^+ =0$ and
$$ id_{\RR^{c_i}}- (A_i^+ \circ A_i+ A_{i+1}\circ A_{i+1}^+)$$
defines the orthogonal projection of $\RR^{c_i}$
onto the homology $H_i$.
\end{proposition}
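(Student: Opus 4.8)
The plan is to work entirely in the orthonormal bases supplied by Theorem~\ref{main thm}, where $A_i$ has the block form $\overline{\Sigma}_i$ and $A_i^+$ has the transposed block form recorded just above. First I would write out the two relevant products. Using the block structures, $A_{i+1}^+ \circ A_i^+$ is a product of two matrices both of whose only nonzero block sits in a position that forces the composite to vanish — concretely, $A_i^+ = U_i \circ M_i \circ U_{i-1}^t$ with $M_i$ having its single nonzero block $\Sigma_i^{-1}$ in the ``$r_i \times r_i$ row, $r_i$-of-$c_{i-1}$ column'' slot, and $A_{i+1}^+ = U_{i+1}\circ M_{i+1}\circ U_i^t$; the middle factor $U_i^t U_i$ is the identity, and the block positions of $M_{i+1}$ and $M_i$ do not line up (the image of $M_i$ lands in the ``$\image A_i$'' summand of $\RR^{c_{i-1}}$, which is $(\ker A_i)^\perp$ inside $\RR^{c_i}$ after relabeling, whereas $M_{i+1}$ reads off the ``$\image A_{i+1}$'' summand), so the product is zero. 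This is just the standard fact that $A_i^+$ has image $(\ker A_i)^\perp = \image A_i^t$ and kernel $(\image A_i)^\perp$, combined with $\image A_{i+1}\subseteq \ker A_i$; I would phrase it via the block matrices to keep it self-contained.

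Next I would compute the two self-adjoint idempotents $A_i^+\circ A_i$ and $A_{i+1}\circ A_{i+1}^+$. In the Theorem~\ref{main thm} basis, $A_i^+\circ A_i = U_i \circ \operatorname{diag}(I_{r_i},0,0)\circ U_i^t$, the orthogonal projection of $\RR^{c_i}$ onto the first block $(\ker A_i)^\perp$; and $A_{i+1}\circ A_{i+1}^+ = U_i\circ \operatorname{diag}(0,I_{r_{i+1}},0)\circ U_i^t$, the orthogonal projection onto the second block $\image A_{i+1}$. (These are the general identities $A^+A = $ projection onto $\image A^t$ and $AA^+ = $ projection onto $\image A$, applied to $A_i$ and $A_{i+1}$ respectively and read in the common basis $U_i$.) Because the two blocks are orthogonal, the sum $A_i^+\circ A_i + A_{i+1}\circ A_{i+1}^+ = U_i\circ \operatorname{diag}(I_{r_i},I_{r_{i+1}},0)\circ U_i^t$ is again an orthogonal projection, onto $(\ker A_i)^\perp \oplus \image A_{i+1}$. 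Subtracting from the identity gives $U_i\circ \operatorname{diag}(0,0,I_{h_i})\circ U_i^t$, which by the identification in the proof of Theorem~\ref{main thm} is exactly the orthogonal projection onto $H_i = ((\ker A_i)^\perp\oplus\image A_{i+1})^\perp$.

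The only genuine point requiring care — the ``main obstacle,'' though it is mild — is checking that the two pseudoinverse projections $A_i^+\circ A_i$ and $A_{i+1}\circ A_{i+1}^+$ are written in the \emph{same} orthonormal basis $U_i$ of $\RR^{c_i}$, so that their matrix sum really is block-diagonal and hence a projection. This is precisely what Theorem~\ref{main thm} arranges: the columns of $U_i$ simultaneously diagonalize $A_i$ (on the left end, splitting off $(\ker A_i)^\perp$) and $A_{i+1}$ (on the right end, splitting off $\image A_{i+1}$), with the leftover columns spanning $H_i$. Once that compatibility is invoked, everything is block bookkeeping. I would therefore organize the write-up as: (i) block form of $A_i^+$ from the SVD; (ii) vanishing of $A_{i+1}^+\circ A_i^+$ by mismatched blocks; (iii) identification of $A_i^+\circ A_i$ and $A_{i+1}\circ A_{i+1}^+$ as the first- and second-block projections in the $U_i$-basis; (iv) orthogonality of the blocks, so the complementary projection is onto $H_i$.
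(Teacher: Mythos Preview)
Your proposal is correct and follows essentially the same route as the paper: identify $A_i^+\circ A_i$ as the orthogonal projection onto $(\ker A_i)^\perp$ and $A_{i+1}\circ A_{i+1}^+$ as the orthogonal projection onto $\image A_{i+1}$, note these subspaces are orthogonal, and conclude that the complementary projection lands on $H_i$. The only difference is cosmetic: the paper invokes the standard pseudoinverse identities directly, whereas you verify them by computing in the block coordinates supplied by Theorem~\ref{main thm}.
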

\begin{proof} We know that
$A_i^+\circ A_i$ defines the projection onto $(\ker A_i)^\perp$ and $A_{i+1} \circ A_{i+1}^+$ defines
the projection onto $\image A_{i+1}$.
The result follows immediately since
these spaces are orthogonal and $H_i=((\ker A_i)^\perp \oplus \image A_{i+1} )^\perp$.\end{proof}

We call
$$
0 \rTo \RR^{c_0} \rTo^{A_1^+ } \RR^{c_1} \rTo^{A_2^+ } \ldots \rTo^{A_n^+ } \RR^{c_n} \rTo 0.
$$
the {\em pseudoinverse complex} of
$$
0 \lTo \RR^{c_0} \lTo^{A_1 } \RR^{c_1} \lTo^{A_2} \ldots \lTo^{A_n } \RR^{c_n} \lTo 0.
$$

\begin{remark} If the matrices $A_i$ have entries in  a subfield $K \subset \RR$, then the pseudoinverse complex is also defined over $K$.
This follows since
the pseudoinverse is uniquely determined by the Penrose relations \cite{Pen}:
\begin{align*}
&A_i\circ A_i^+ \circ A_i = A_i, &A_i\circ A_i^+=(A_i\circ A_i^+)^t,\cr
&A_i^+\circ A_i \circ A_i^+ = A_i^+, \;&A_i^+\circ A_i=(A_i^+ \circ A_i)^t,&
\end{align*}
which form an algebraic system of equations for the entries of $A_i^+$ with a unique solution
whose coefficients are in $K$.
In particular, this holds for $K=\QQ$.

If the entries of the matrices are in the
finite field $\FF_q$,
the pseudoinverse of $A_i$ is well defined over $\FF_q$  with respect to the dot-product on $\FF_q^{c_i}$ and $ \FF_q^{c_{i-1}}$ if $$\ker A_i \cap (\ker A_i)^\perp = 0 \subset \FF_q^{c_i} \hbox{ and }\image A_i \cap (\image A_i)^\perp =0 \subset \FF_q^{c_{i-1}}.$$

We have implemented the computation of the pseudoinverse complex for double precision floating-point numbers
$\RR_{53}$, the rationals
$\QQ$, and finite fields $\FF_q$ in our Macaulay2 package  \href{https://www.math.uni-sb.de/ag/schreyer/index.php/computeralgebra}{SVDComplexes}.
\end{remark}

\section{Algorithms}\label{algorithms}

We present two algorithms for computing
a singular value decomposition
of a complex followed by an example.

\begin{algorithm}[Successive projection method]\label{SuccessiveProjectionMethod} $\quad$\\
INPUT: A sequences $B_1,\ldots,B_n$  of floating point matrices which are approximations of a complex $A_1,\ldots,A_n$;
a threshold $b$ for which we took $b=10^{-4}$ as default value in our implementation. \\
OUTPUT: Integers  $r_1,\ldots r_n$ and floating point approximations $U_0, \ldots, U_n$ of orthogonal matrices which approximate the singular value decomposition of the corresponding complex.\\

\begin{enumerate}[1.]
\item Set $r_0=0$, $Q_0=0$ and $P_0 = \id_{C_0}$.
\item For $i$ from $1$ to $n$ do
\begin{enumerate}[a.]
\item Compute the $(c_{i-1}-r_{i-1}) \times c_i$ matrix $\widetilde B_i= P_{i-1}\circ B_i$.
\item Compute the  singular value decomposition of $\widetilde B_i$, i.e. the diagonal matrix $\widetilde \Sigma_i$ of the singular values $\sigma^i_1\ge \sigma^i_2\ge \ldots$
and orthogonal matrices $\widetilde U_{i-1}, \widetilde V_i^t$ such that
$$ \widetilde B_i=\widetilde U_{i-1}\circ \widetilde \Sigma_i \circ \widetilde V_i^t.$$
\item Decide how many singular values of $\widetilde \Sigma_i$ are truly non-zero, e.g.
for $n_i =\min\{c_{i-1}-r_{i-1},c_i\}$ and for a threshold $b$, say $b=10^{-6}$, take
$$r_i= \begin{cases} \min \{j<n_i  \mid b\sigma^i_j\ge \sigma^i_{j+1}\}, &\hbox{ if this set is non-empty} \cr
n_i & \hbox{ else} \end{cases}$$
\item  Decompose
$$\widetilde V_i^t=\begin{pmatrix}Q_i  \cr  P_i \end{pmatrix}$$
 into submatrices consisting of the first $r_i$  and last $c_i-r_i$ rows of $\widetilde V_i^t$.
 (So $P_i$ defines an approximation of the orthogonal projection $C_i \to \ker A_i$ if our guess for $r_i$ was correct.)
\item Define $$U^t_{i-1}=\begin{pmatrix}Q_{i-1}   \cr
\widetilde U^t_{i-1}\circ P_{i-1} \end{pmatrix}.$$
\item If $i=n$ then $U_n=\widetilde V_n^t$.
\end{enumerate}
\item Return  $U_0,\ldots,U_n$ and $r_1,\ldots,r_n$.
\end{enumerate}
\end{algorithm}

{\it Proof} of concept. We will show that the algorithm gives a good approximation, provided that
\begin{enumerate}[i)]
\item the approximation $B_i$ of $A_i$ is sufficiently good,
\item we make the correct decisions in step 2.c  and
\item we compute with high enough precision.
\end{enumerate}
By induction on $i$ we will see that $P_i$ defines an approximation of the orthogonal projection $C_i \to \ker A_i$. Since $V_i^t$ is approximately orthogonal
$$
\begin{pmatrix}Q_{i}   \cr P_{i} \end{pmatrix}\circ \begin{pmatrix}Q^t_{i}   & P^t_{i} \end{pmatrix} \approx \begin{pmatrix}\id_{r_i} & 0 \cr   0 & \id_{c_i-r_i}\end{pmatrix}
$$
where $\id_k$ denotes a $k\times k$ identity matrix,
we  additionally  conclude that
$Q_i$ is
an approximation
 of  the orthogonal projection $C_i \to (\ker A_i)^\perp$.
 This is trivially true in case $i=0$, since $A_0=0$.

 For the induction step from $i-1$ to $i$, we note that $B_i \approx A_i$ and $\image A_i \subset \ker A_{i-1}$ implies that $Q_{i-1}\circ B_i \approx 0$.
 So $A_i$ and $P_{i-1} \circ B_i=\widetilde B_i$ have the same `large' singular values. From
 $$
 \widetilde B_i = \widetilde U_i \circ \widetilde \Sigma_i \circ V_i^t
 $$
 and
 $$
 V_i^t = \begin{pmatrix}Q_{i}   \cr P_{i} \end{pmatrix}
 $$
we conclude the assertion that $P_i$ defines approximately the orthogonal projection $C_i \to \ker A_i$  under the assumption, that  our choice of $r_i$ is correct. Moreover,
\begin{align*}
U^t_{i-1} \circ A_i \circ U_i &\approx U_{i-1}^t\circ B_i \circ U_i \cr
&=
\begin{pmatrix}Q_{i-1} \cr   \widetilde U_{i-1}^t \circ P_{i-1}\end{pmatrix} \circ B_i \circ \begin{pmatrix}Q_{i}^t  &  P_{i}^t \circ \widetilde U_i \end{pmatrix} \cr
& \approx
\begin{pmatrix}0 \cr   \widetilde U_{i-1}^t \circ \widetilde B_i\end{pmatrix} \circ \begin{pmatrix}Q_{i}^t  &  P_{i}^t \circ \widetilde U_i \end{pmatrix} \cr
&\approx \begin{pmatrix}  0 \cr \widetilde U_{i-1}^t \circ \widetilde U_{i-1} \circ \widetilde \Sigma_i \circ \begin{pmatrix}Q_{i}   \cr P_{i} \end{pmatrix}
\end{pmatrix} \circ \begin{pmatrix}Q_{i}^t  &  P_{i}^t \circ \widetilde U_i \end{pmatrix} \cr
& \approx \begin{pmatrix} 0 \cr
 \widetilde \Sigma_i \circ \begin{pmatrix} \id_{r_i} & 0 \cr   0 & \id_{c_i-r_i}\widetilde U_i \end{pmatrix}
\end{pmatrix} \cr
& \approx \begin{pmatrix} 0 & 0 \cr  \Sigma_i & 0 \cr  0 & 0 \cr \end{pmatrix} \cr
\end{align*}
since
$$
\widetilde \Sigma_i \circ \begin{pmatrix} 0 \cr   \id_{c_i-r_i}\end{pmatrix}  \approx 0.
$$
This shows that the desired approximation holds.
\qed

\begin{remark} To get more confidence in the correctness of the computation of $r_1,\ldots,r_n$ we can alter step 2.c. A natural approach is to start with two
approximations $B_1,\ldots, B_n$ and $B'_1,\ldots, B'_n$ in different precisions, and to determine $r_1,\ldots r_n$ as the number of stable singular values, i.e. the singular values which have approximately the same value in both computations.
\end{remark}

Our second method quite frequently does not need approximation in two different precisions. It is based on computing  with the {\it Laplacians}
$$
\Delta_i = A_i^t \circ A_i + A_{i+1}\circ A_{i+1}^t.
$$
Note that $\ker \Delta_i \cong H_i$.

\begin{algorithm}[Laplacian method]\label{LaplacianMethod}
$\quad$ \\
INPUT: A sequence $B_1,\ldots,B_n$ of floating points approximations of a complex $A_1,\ldots, A_n$, whose Laplacians have no multiple eigenvalues; a threshold $b$ for the relative precision for equality of eigenvalues. In our implementation we took $b=10^{-4}$ as the default value.\\
OUTPUT: Integers  $r_1,\ldots r_n$ and floating point approximations $U_0, \ldots, U_n$ of orthogonal matrices which approximate the singular value decomposition of the corresponding complex.\\
\begin{enumerate}
\item Compute diagonalisations $D_i$ of the symmetric semi-positive matrices
$$
\Delta'_i=B_i^t \circ B_i + B_{i+1}\circ B_{i+1}^t
$$ and orthogonal matrices $U'_i \in SO(c_i)$ such that
$$
\Delta'_i =U'_i\circ D_i \circ {U'}^t_i
$$
\item If some $D_i$ has a non-zero eigenvalue with higher multiplicity abort.
\item Let $r_i$ be the number of eigenvalues values which occur up to a chosen relative precision both in $D_{i-1}$ and $D_i$.
\item  Compute the corresponding  $c_i\times c_i$ permutation matrices $P_i$, which put the $r_i+r_{i+1}$ common
diagonal entries of $D_i$ into the first positions and set $U_i''= U'_i \circ P_i$.
\item Compute
$$U_{i-1}''^t \circ B_i \circ U_i'' \approx \begin{pmatrix} 0 & 0 & 0 \cr
\Sigma_i' & 0 & 0 \cr
0 & 0 & 0 \cr \end{pmatrix}
$$
where the $r_i \times r_i$ matrix $\Sigma_i'$ is approximately a diagonal matrix.
\item Let $\Sigma_i$ be the diagonal matrix whose entries are the absolute values of the diagonal entries of $\Sigma_i'$.
\item Inductively, for $i$ from $1$ to $n$ change the signs of the eigenvectors given by the columns of $U_i''$ to obtain orthogonal matrices $U_i$ such that
$$
U_{i-1}^t \circ B_i \circ U_i \approx \begin{pmatrix} 0 & 0 & 0 \cr
\Sigma_i & 0 & 0 \cr
0 & 0 & 0 \cr \end{pmatrix} =: \overline \Sigma_i.
$$
\item Return $U_0,\ldots,U_n$ and $r_1,\ldots,r_n$.
\end{enumerate}
\end{algorithm}

{\it Proof} of concept. We show that we the algorithm produces a good approximation in case
\begin{enumerate}[i)]
\item the Laplacians $\Delta_i =A_i^t \circ A_i + A_{i+1}\circ A_{i+1}^t$ have no non-zero multiple eigenvalues,
\item the approximations $B_i$ of the $A_i$ are good enough; in particular,  the disturbed non-zero eigenvalues stay apart,
\item the disturbed zero eigenvalues do not accidentally coincide up to a large relative precision, and
\item we compute with high enough precision.
\end{enumerate}
Indeed, by ii) and iii) we determine the ranks in step 3 correctly. So in step 5 we will reach approximately a diagonal matrix,
and it remains to adjust the signs. \qed

\begin{remark}  For the choice of the thresholds $b$ in Algorithm \ref{SuccessiveProjectionMethod} and Algorithm \ref{LaplacianMethod} we have only experimental
evidence. In particular our  default value $10^{-4}$ has no justification, not even heuristically. We leave it as an open problem to derive a justified choice, which might depend also on the ranks $c_i =\rank C_i$ of the $\RR$-vector spaces in the complex.
\end{remark}

\begin{example}\label{exampleOfSection2} We consider the complex
$$
0 \lTo \RR^{3} \lTo^{A_1 } \RR^{5} \lTo^{A_2} \RR^{5} \lTo^{A_3 } \RR^{3} \lTo 0
$$
defined by the matrices
{\small
$$
\begin{pmatrix}14&
      {-4}&
      16&
      3&
      {-9}\\
      14&
      {-5}&
      20&
      9&
      1\\
      4&
      1&
      {-4}&
      {-12}&
      {-24}\\
      \end{pmatrix},
\begin{pmatrix}{-43}&
      {-50}&
      {-27}&
      {-51}&
      9\\
      12&
      {-24}&
      36&
      0&
      {-12}\\
      35&
      34&
      27&
      39&
      {-9}\\
      {-3}&
      {-10}&
      3&
      {-6}&
      {-1}\\
      {-11}&
      {-10}&
      {-9}&
      {-12}&
      3\\
      \end{pmatrix},
\begin{pmatrix}{-8}&
      {-16}&
      {-12}\\
      {-5}&
      {-1}&
      {-15}\\
      {-1}&
      13&
      {-14}\\
      12&
      12&
      28\\
      {-1}&
      25&
      {-24}\\
      \end{pmatrix}.
$$}
The SVD normal form of the complex is given by the matrices
{\small
$$\begin{pmatrix}34.489&
      0&
      0&
      0&
      0\\
      0&
      28.714&
      0&
      0&
      0\\
      0&
      0&
      0&
      0&
      0\\
      \end{pmatrix},
\begin{pmatrix}0&
      0&
      0&
      0&
      0\\
      0&
      0&
      0&
      0&
      0\\
      114.08&
      0&
      0&
      0&
      0\\
      0&
      47.193&
      0&
      0&
      0\\
      0&
      0&
      0&
      0&
      0\\
      \end{pmatrix},\begin{pmatrix}0&
      0&
      0\\
      0&
      0&
      0\\
      45.993&
      0&
      0\\
      0&
      35.209&
      0\\
      0&
      0&
      0\\
      \end{pmatrix}.
$$}
So each of the matrices $A_i$ has rank $2$, and all homology groups $H_i$ are 1-dimensional.

The transformation into the normal form is given by the orthogonal matrices
{\small
$$
\begin{pmatrix}{-.6553}&
      .2393&
      {-.7165}\\
      {-.7549}&
      {-.1745}&
      .6322\\
      .0262&
      .9551&
      .2950\\
      \end{pmatrix},
\begin{pmatrix}{-.5694}&
      .1646&
      {-.7702}&
      {-.1318}&
      .1950\\
      .1862&
      .0303&
      .0679&
      {-.9710}&
      .1301\\
      {-.7448}&
      {-.1213}&
      .6010&
      {-.0706}&
      .2537\\
      {-.2631}&
      {-.4289}&
      {-.0790}&
      {-.1821}&
      {-.8411}\\
      .1309&
      {-.8794}&
      {-.1862}&
      .0404&
      .4162\\
      \end{pmatrix},$$
 $$
 \begin{pmatrix}.5019&
      {-.1770}&
      .2288&
      .5338&
      .6160\\
      .5257&
      .6126&
      .3335&
      .1127&
      {-.4738}\\
      .3586&
      {-.7250}&
      .3461&
      {-.3015}&
      {-.3677}\\
      .5735&
      .0970&
      {-.5972}&
      {-.5061}&
      .2210\\
      {-.1195}&
      .2417&
      .6000&
      {-.5961}&
      .4604\\
      \end{pmatrix},
 \begin{pmatrix}{-.2525}&
      {-.2843}&
      {-.9249}\\
      .1813&
      {-.9528}&
      .2434\\
      {-.9505}&
      {-.1062}&
      .2921\\
      \end{pmatrix}
$$}
which we have printed here with 4 valid digits only. In other words, the diagram
$$
\xymatrix{ 0 & \ar[l] \RR^3& \ar[l]_{A_1}  \RR^5 &\ar[l]_{A_2}  \RR^5 &\ar[l]_{A_3}  \RR^3 &\ar[l]  0 \cr
0 & \ar[l] \ar[u]^{U_0}  \RR^3& \ar[l]^{\overline \Sigma_1} \ar[u]^{U_1} \RR^5 &\ar[l]^{\overline \Sigma_2} \ar[u]_{U_2} \RR^5 &\ar[l]^{\overline \Sigma_3}  \ar[u]_{U_3} \RR^3 &\ar[l]  0 \cr
}
$$
commutes (up to the chosen precision).
The first matrix $A_1^+$ of the pseudoinverse complex over $\RR_{53}$ printed with 6 valid digits is
$$
 \begin{pmatrix}.0121907&
      .0114627&
      .0050431\\
      {-.00328525}&
      {-.00426002}&
      .00115014\\
      .013141&
      .0170401&
      {-.00460058}\\
      .00142545&
      .00836608&
      {-.0144655}\\
      {-.00981498}&
      .00248076&
      {-.0291523}\\
      \end{pmatrix}$$
which is an approximation of
 $$\begin{pmatrix}5978/490373&
      5621/490373&
      2473/490373\\
      {-1611/490373}&
      {-2089/490373}&
      564/490373\\
      6444/490373&
      8356/490373&
      {-2256/490373}\\
      699/490373&
      8205/980746&
      {-14187/980746}\\
      {-4813/490373}&
      2433/980746&
      {-28591/980746}\\
      \end{pmatrix}$$
\end{example}

\begin{remark}This simple example is pretty stable against errors. If we disturb the entries of the matrices in the complex arbitrarily by an relative
error of $\le 10^{-3}$, then taking $10^{-2}$ as a threshold the algorithms predicts the dimension of the homology groups still
correctly, see \href{http://www.math.uiuc.edu/Macaulay2/doc/Macaulay2-1.10/share/doc/Macaulay2/SVDComplexes/html}{SVDComplexes}.\end{remark}

\section{Projection}\label{applications}

One application of using the singular value
decomposition of a complex is to compute
the pseudoinverse complex as described
in Section~\ref{basics}.
In this section, we consider projecting
a sequence of matrices onto a complex.

\begin{algorithm}[Projection to a complex]\label{projection to a complex} $\quad$\\
INPUT: A sequence $B_1,\ldots,B_n$ of $c_{i-1} \times c_i$ matrices and a sequence
$h_0,\ldots,h_n$ of desired dimension of homology groups. \\
OUTPUT: A sequence $A_1,\ldots,A_n$ of matrices which define a complex with desired homology, if possible.\\
\begin{enumerate}
\item Set $r_0=0$ and compute $r_1,\ldots,r_{n+1}$ from
$ c_i =r_i+r_{i+1}+h_i$ recursively.
If $r_i <0 $ for some $i$ or $r_{n+1} \not=0$, then return the error message: ``The rank conditions cannot be satisfied.''
\item Set $Q_0=0$ and $P_0 = \id_{C_0}$.
\item For $i=1,\dots,n$
\begin{enumerate}[a.]
\item Compute the $(c_{i-1}-r_{i-1}) \times c_i$ matrix $\widetilde B_i= P_{i-1}\circ B_i$.
\item Compute the  singular value decomposition
$$ \widetilde B_i=\widetilde U_{i-1}\circ \widetilde \Sigma_i \circ \widetilde V_i^t.$$
\item Define
$$ \overline \Sigma_i = \bordermatrix{
& r_{i} & r_{i+1} & h_i \cr
r_{i-1} & 0 & 0& 0 \cr
r_{i} & \Sigma_i  & 0 & 0 \cr
h_{i-1} & 0 & 0 & 0 \cr
}
$$
as a block matrix where $\Sigma_i$ is a diagonal matrix with entries the largest~$r_i$ singular values of $\widetilde B_i$.

\item  Decompose
$$\widetilde V_i^t=\begin{pmatrix}Q_i  \cr  P_i \end{pmatrix}$$
 into submatrices consisting of the first $r_i$  and last $c_i-r_i$ rows of $\widetilde V_i^t$.
\item Define $$U^t_{i-1}=\begin{pmatrix}Q_{i-1}   \cr
\widetilde U^t_{i-1}\circ P_{i-1} \end{pmatrix}.$$
\item If $i=n$, then $U_n=\widetilde V_n^t$.
\end{enumerate}
\item Set $A_i = U_{i-1} \circ \overline \Sigma_i \circ U_i^t$  and
return $A_1,\ldots, A_n$.
\end{enumerate}
\end{algorithm}

\begin{remark}
By construction, it is clear that
Algorithm~\ref{projection to a complex}
computes a complex.  We leave it as an open problem
to compute the ``closest'' complex
to the given matrices $B_1,\dots,B_n$.
\end{remark}

\begin{example}
In our package \href{https://www.math.uni-sb.de/ag/schreyer/index.php/computeralgebra}{RandomComplexes}, we have implemented several methods to produce complexes over the integers. The first function randomChainComplex takes as input a sequences h and r of desired
dimension of homology groups and ranks of the matrices. It uses
the
 LLL algorithm~ \cite{LLL} to produce example of desired moderate height. It runs fast for complexes of ranks $c_i \le 100$ but  is slow for larger examples because of the use of the LLL-algorithm. Example \ref{exampleOfSection2} was produced this way.

We test Algorithms~\ref{SuccessiveProjectionMethod}
and~\ref{LaplacianMethod} to verify
the desired dimension of the homology groups.
Table~\ref{Tab:Comparison} compares the timings
of these two algorithms on various examples of this sort.

\begin{table}
\centering
\begin{tabular}{|l |l |c |c |}\hline
$c_0,\ldots c_3$ & $h_0,\ldots,h_3$ & Alg.~\ref{SuccessiveProjectionMethod} (sec) & Alg.~\ref{LaplacianMethod} (sec) \cr \hline
$7, 21, 28, 14$&$2, 3, 2, 1$& .00211& .0110\cr
$8, 27, 35, 17$&$3, 6, 4, 2$& .00225& .0182\cr
$9, 33, 42, 20$&$4, 9, 6, 3$&.00254&.0294\cr
$10, 39, 49, 23$&$5, 12, 8, 4$& .00291& .0647\cr
$11, 45, 56, 26$&$6,15, 10, 5$& .00355& .1090\cr
$12, 51, 63, 29$&$7, 18, 12, 6$&.00442&.1150\cr
\hline
\end{tabular}
\caption{Comparison of timings using Algorithms~\ref{SuccessiveProjectionMethod}
and~\ref{LaplacianMethod}.}\label{Tab:Comparison}
\end{table}
\end{example}

\begin{example}
Our second series of examples is constructed from Stanley-Reisner simplicial complexes of randomly chosen square free monomial ideals.
In the specific cases below, we selected $N$ square free monomials at random in a polynomial ring with $k$ variables which are summarized
in Table~\ref{Tab:Comparison2}.
Algorithm~\ref{LaplacianMethod}
does not apply to these examples since
repeated eigenvalues occur.

\begin{table}
{\small
\begin{tabular}{|l |l |rrrrrrrrrr |l  |}\hline
$k$&$N$&$c_0$&$c_1$&$c_2$&$\ldots$ & &&&&&&Alg~\ref{SuccessiveProjectionMethod}\cr
&&$h_0$&$h_1$&$h_2$&$\ldots$ & &&&&&&
\multicolumn{1}{c|}{(sec)}\cr\hline\hline
8& 20&8& 27& 44& 30&&&&&&& \cr
&&1& 0& 0& 1&&&&&&& .00185\cr\hline
9& 21& 9& 35& 74& 85& 46&&&&&& \cr
&&1& 0& 0& 0& 0&&&&&& .0036\cr\hline
10&23&10& 45& 118& 190& 173& 69&&&&&\cr
&&1& 0&0& 0& 3& 0&&&&& .0198\cr\hline
11& 26&11&55& 165& 326& 431& 361& 156& 19&&&\cr
&&1&0& 0& 0& 0& 0& 2& 0&&& .241\cr\hline
12& 30&12& 66& 218& 474& 694& 664& 375&101&&&\cr
&&1& 0& 0& 0& 0& 0& 2& 0&&& 1.29\cr\hline
13& 35&13& 78& 286& 712& 1253&1553& 1291& 639& 141&&\cr
&& 1& 0& 0& 0& 0& 0& 0& 6& 1&&39.7\cr\hline
14& 41&14&91&364&996&1948&2741&2687&677&559&75& \cr
&&1& 0& 0& 0& 0& 0& 0& 7& 0& 0& 355.\cr
\hline
\end{tabular}}
\caption{Comparison of timings using Algorithm~\ref{SuccessiveProjectionMethod}.
}\label{Tab:Comparison2}
\end{table}

\end{example}

\section{Application to syzygies}\label{syzygies}

We conclude with an application
concerning the computation of Betti numbers in free resolutions.
Let $S=K[x_0,\ldots,x_n]$ be the standard graded polynomial ring and $M$ a finitely generated graded $S$-module. Then,
by Hilbert's syzygy theorem,~$M$ has a finite free resolution:
$$
0 \lTo M \lTo F_0 \lTo^{\; \varphi_1} F_1 \lTo^{\; \varphi_2} \ldots \lTo^{\; \varphi_c} F_c \lTo 0
$$
by free graded $S$-modules $F_i =\sum_j S(-i-j)^{b_{ij}}$
of length $c \le n+1$. Here $S(-\ell)$ denotes the free $S$-module with generator in degree $\ell$.

If we choose in each step a minimal number of homogenous generators, i.e., if $\varphi_i(F_i) \subset (x_0,\ldots x_n) F_{i-1}$,
then the free resolution is unique up to
an isomorphism. In particular, the Betti numbers $b_{ij}$ of a minimal resolution are numerical
invariants of $M$.  On the other hand, for basic applications of free resolutions such as the computation of $\Ext$ and $\Tor$-groups, any resolution can be used.

Starting with a reduced Gr\"obner basis of the submodule $\varphi_1(F_1) \subset F_0$ there is, after some standard choices on orderings, a free resolution such that at each step the columns of $\varphi_{i+1}$ form a reduced Gr\"obner basis of $\ker \varphi_i$. This resolution is uniquely determined however, in most cases, highly nonminimal. An algorithm to compute this standard nonminimal resolution was developed in \cite{EMSS} which turned out to be much faster then the computation of a minimal resolution by previous methods.

The following forms the examples which we use as test cases.

\begin{proposition}[Graded Artinian Gorenstein Algebras]\label{AGR} Let $f \in \QQ[x_0,\ldots,x_n]$ be a homogeneous polynomial of degree $d$.
In $S=\QQ[\partial_0,\ldots,\partial_n]$,
consider the ideal $I=\langle D \in S \mid D(f)=0 \rangle$ of constant differential operators which annihilate~$f$.
Then, $A^{\perp}_f := S/I$ is an artinian Gorenstein Algebra with socle in degree~$d$.
\end{proposition}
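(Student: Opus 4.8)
The plan is to identify $A^{\perp}_f=S/I$ with the \emph{inverse system} of $f$ in the sense of Macaulay/apolarity and read off all three assertions from that picture. Regard $R:=\QQ[x_0,\ldots,x_n]$ as a graded $S$-module by letting $\partial_i$ act as $\partial/\partial x_i$, so that $D(f)$ denotes the result of applying the constant-coefficient operator $D\in S$ to $f$; with the standard grading on $S$ (so $\deg\partial_i=1$), an operator in $S_e$ lowers degrees by $e$, i.e.\ it sends $R_j$ into $R_{j-e}$. First I would observe that $I=\langle D: D(f)=0\rangle=\operatorname{Ann}_S(f)$ is a homogeneous ideal: if $D=\sum_e D_e$ with $D_e\in S_e$ and $D(f)=0$, then the summands $D_e(f)\in R_{d-e}$ lie in distinct graded pieces of $R$, so each $D_e(f)=0$. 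Consequently $A:=S/I$ is a graded $\QQ$-algebra, connected with $A_0=\QQ$, and the $S$-linear surjection $S\to M:=S\!\cdot\! f\subseteq R$, $D\mapsto D(f)$, has kernel exactly $I$ and carries $S_e$ onto $M\cap R_{d-e}$; hence $A\cong M$ as $S$-modules, with $A_e$ matching $M\cap R_{d-e}$ (note the reversal of degrees).

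Next I would deduce that $A$ is Artinian with top nonzero degree exactly $d$. Any $D\in S_e$ with $e>d$ kills $f$ because $D(f)\in R_{d-e}=0$, so $S_{>d}\subseteq I$ and $\dim_\QQ A<\infty$; being graded and connected, $A$ is then local with maximal ideal $\mathfrak m=\bigoplus_{e>0}A_e$ and residue field $\QQ$. That $A_d\neq 0$ is where characteristic zero enters: writing $f=\sum_{|\alpha|=d}c_\alpha x^\alpha$ with some $c_\alpha\neq 0$, one computes $\partial^\alpha(f)=c_\alpha\,\alpha!\neq 0$, so $\partial^\alpha\notin I$ and $A_d\neq 0$; together with $S_{>d}\subseteq I$ this pins $d$ as the top degree of $A$.

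Finally I would compute the socle. The $S$-module isomorphism $A\cong M$ identifies $\operatorname{soc}(A)=(0:_A\mathfrak m)$ with $\operatorname{soc}(M)=\{m\in M:\partial_i(m)=0\text{ for all }i\}$. Since a polynomial all of whose first partials vanish is a constant (characteristic zero again), $\operatorname{soc}(M)=M\cap R_0=\QQ$ is one-dimensional; and under the correspondence $A_e\leftrightarrow M\cap R_{d-e}$ this line sits in $A_d$. Hence $\operatorname{soc}(A)=A_d$ is one-dimensional, which simultaneously says the socle is concentrated in degree $d$ and, since an Artinian local ring with one-dimensional socle over its residue field is Gorenstein (a standard characterization), that $A^{\perp}_f$ is Gorenstein. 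Equivalently, one can make the duality explicit: using $\dim A_d=1$ from above, the multiplication pairings $A_e\times A_{d-e}\to A_d\cong\QQ$ are perfect, because for $0\neq\bar D\in A_e$ the polynomial $D(f)\in R_{d-e}$ is a nonzero form, so some $D'\in S_{d-e}$ with $D'(D(f))\neq 0$ yields a nonzero product.

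The genuinely routine parts are the grading bookkeeping and the two elementary facts about differentiation in characteristic zero used above. The one spot that needs care — and which I would flag as the main pitfall rather than a real obstacle — is the reversal of degrees in the identification $A\cong M$: one must check that it is $A_d$, not $A_0$, that corresponds to the constants $R_0=\operatorname{soc}(M)$, so that the conclusion reads ``socle in degree $d$'' and not ``in degree $0$''.
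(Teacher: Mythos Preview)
Your proof is correct and follows the standard apolarity/Macaulay inverse system argument. The paper itself does not prove this proposition; it merely states it and refers the reader to \cite{RS} for details, so there is no in-paper proof to compare against. Your write-up supplies exactly the kind of argument one would find in such a reference: the identification of $S/I$ with the cyclic $S$-module $S\!\cdot\! f\subset R$, the degree bound $S_{>d}\subset I$, the characteristic-zero computation showing $A_d\neq 0$, and the socle calculation via $\operatorname{soc}(M)=M\cap R_0$. The only caveat I would add is cosmetic: your remark that ``an Artinian local ring with one-dimensional socle over its residue field is Gorenstein'' is the crux, and while it is indeed a standard characterization (equivalently, $A$ is injective as a module over itself), in a self-contained exposition one might cite it explicitly rather than leave it as folklore.
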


For more information on this topic see, e.g., \cite{RS}.

\begin{example} Let
$f= \ell_1^4+\ldots+\ell_{18}^4 \in \QQ[x_0,\ldots,x_7]$
be the sum of $4^{\rm th}$ powers of 18 sufficiently general chosen linear forms $\ell_s$.
The Betti numbers $b_{ij}$ of the minimal resolution $M=A^\perp_f$ as an $S$-module are zero outside the range
${i=0,\ldots, 8, j=0,\ldots, 4}$. In this range, they take the values:
\begin{center}\begin{tabular}{c|ccccccccc}
$j \setminus i $     &0&1&2&3&4&5&6&7&8\\ \hline
	\text{0}&1&\text{.}&\text{.}&\text{.}&\text{.}&\text{.}&\text{.}&\text{.}&\text{.}\\
	\text{1}&\text{.}&18&42&\text{.}&\text{.}&\text{.}&\text{.}&\text{.}&\text{.}\\
	\text{2}&\text{.}&10&63&288&420&288&63&10&\text{.}\\
	\text{3}&\text{.}&\text{.}&\text{.}&\text{.}&\text{.}&\text{.}&42&18&\text{.}\\
	\text{4}&\text{.}&\text{.}&\text{.}&\text{.}&\text{.}&\text
       {.}&\text{.}&\text{.}&1\\\end{tabular}\end{center}
which, for example, says that $F_2= S(-3)^{42} \oplus S(-4)^{63}$.  We note that the symmetry of the table is a well-known consequence of the Gorenstein property.

On the other hand the Betti numbers of the uniquely determined nonminimal resolution are much larger:
\begin{center}\begin{tabular}{c|ccccccccc}
    $j \setminus i$  &0&1&2&3&4&5&6&7&8\\ \hline
 \text{0}&1&\text{.}&\text{.}&\text{.}&\text{.}&\text{.}&\text{.}&\text{.}&\text{.}\\
 \text{1}&\text{.}&18&55&75&54&20&3&\text{.}&\text{.}\\
 	\text{2}&\text{.}&23&145&390&580&515&273&80&10\\\text{3}&\text{.}&7&49&147&245
      &245&147&49&7\\\text{4}&\text{.}&1&7&21&35&35&21&7&1\\\end{tabular}\end{center}
To deduce from this resolution the Betti numbers of the minimal resolution, we can use the formula
$$b_{ij} = \dim \Tor^S_i(M,\QQ)_{i+j}.$$
For example, to deduce $b_{3,2}=288$, we have to show that the $5^{\rm th}$ constant strand of the nonminimal resolution
$$ 0 \lTo \QQ^1 \lTo \QQ^{49}  \lTo \QQ^{390} \lTo \QQ^{54} \lTo 0$$
has homology only in one position.
\end{example}

The matrices defining the differential in the nonminimal resolution have polynomial entries whose coefficients in $\QQ$ can have very large height
such that the computation of the homology of the strands becomes infeasible.
There are two options, how we can get information about  the minimal Betti numbers:
\begin{itemize}
\item Pick a prime number $p$ which does not divided any numerator of the normalized reduced Gr\"obner basis
and then reduce modulo $p$
yielding a module~$M(p)$ with the same Hilbert function as $M$. Moreover, for all but finitely many primes $p$, the Betti numbers of $M$ as
an $\QQ[x_0,\ldots,x_n]$-module and of~$M(p)$ as $\FF_p[x_0,\ldots,x_n]$-module coincide.

\item Pass from a normalized reduced  Gr\"obner basis of $\varphi_1(F_1) \subset F_0$ to a floating-point approximation of the Gr\"ober basis. Since in the algorithm for the  computation of  the uniquely determined nonminimal resolution \cite{EMSS}, the majority of ground field operations
are multiplications, we can hope that this computation is numerically stable and that the
singular value decompositions of the linear strands will detect the minimal Betti numbers correctly.
\end{itemize}

\begin{example}
We experimented with artinian graded Gorenstein algebras constructed from randomly chosen forms $f \in \QQ[x_0,\ldots,x_7]$ in 8 variables which were the sum of $n$  $4^{\rm th}$ powers of linear forms where $11 \leq n \leq 20$.  This experiment showed that roughly
$95\%$ the Betti table computed via floating-point
arithmetic coincided with one computed over a finite field.  The reason for this was that the current implementation uses only double precision floating-point
computations which caused difficulty in
detecting zero singular values correctly.
This would be improved following Remark~\ref{RemarkPrec}.
\end{example}

We now consider a series of examples
related to the famous Green's conjecture on canonical curves which was proved in a landmark paper \cite{Voi05} for generic curves.
In $S=\QQ[x_0,\ldots,x_a,y_0,\ldots,y_b]$,
consider the homogeneous ideal~$J_e$ generated by the $2\times 2$ minors of
 $$
 \begin{pmatrix}
 x_0 & x_1 & \ldots &x_{a-1} \cr
 x_1 & x_2 & \ldots & x_a
 \end{pmatrix} \text{ and }
  \begin{pmatrix}
 y_0 & y_1 & \ldots &y_{b-1} \cr
 y_1 & y_2 & \ldots & y_b
 \end{pmatrix}
 $$
together with the entries of the $(a-1) \times (b-1)$ matrix
 $$
 \begin{pmatrix}
 x_0 & x_1 & x_2 \cr
 x_1 & x_2 & x_3 \cr
\vdots & \vdots & \vdots\cr
 x_{a-2}&x_{a-1}& x_a\cr
 \end{pmatrix}
 \begin{pmatrix}
 0& 0 &  e_2\cr
 0 & -e_1 & 0 \cr
 1 & 0 & 0 \cr
 \end{pmatrix}
  \begin{pmatrix}
 y_0 & y_1 & \ldots &y_{b-2} \cr
 y_1 & y_2 & \ldots & y_{b-1} \cr
 y_2 & y_3 & \ldots & y_{b} \cr
 \end{pmatrix}
 $$
 for some parameters $e_1,e_2\in\QQ$.
 Then, by \cite{ES18}, $J_e$ is the homogeneous ideal of an arithmetically Gorenstein surface $X_e(a,b) \subset \PP^{a+b+1}$ with trivial canonical bundle. Moreover, the generators of $J_e$ form a Gr\"obner basis.
 To verify the generic Green's conjecture for curves
 of odd genus $g=2a+1$, it suffices to prove, for some values $e=(e_1,e_2) \in \QQ^2$, that $X_e(a,a)$ has a ``natural'' Betti table, i.e., for each $k$ there is at most one pair $(i,j)$ with $i+j=k$ and $b_{ij}(X_e(a,a))\not=0$. For special values of $e=(e_1,e_2)$, e.g., $e=(0,-1)$, it is known that the resolution is not natural, see \cite{ES18}.

\begin{example}
 For $a=b=6$, our implementation computes the
 following Betti numbers for the nonminimal resolution:
 as
 \setcounter{MaxMatrixCols}{13}
 \begin{small}
$$
\begin{matrix}
      | &0&1&2&3&4&5&6&7&8&9&10&11\\ \hline
|&1&\text{.}&\text{.}&\text{.}&\text{.}&\text{.}&\text{.}&\text{.}&\text{.}&\text{.}&\text{.}&\text{.}\\
|&\text{.}&55&320&930&1688&2060&1728&987&368&81&8&\text{.}\\
|&\text{.}&\text{.}&39&280&906&1736&2170&1832&1042&384&83&8\\
|&\text{.}&\text{.}&\text{.}&1&8&28&56&70&56&28&8&1\\
\end{matrix}
$$
\end{small}

For $e=(2,-1)$ and $e=(0,-1)$,
our implementation correctly computes
the following Betti numbers,
respectively, of the minimal resolutions:
 \begin{small}
$$\begin{matrix}
      |&0&1&2&3&4&5&6&7&8&9&10&11\\ \hline
|&1&\text{.}&\text{.}&\text{.}&\text{.}&\text{.}&\text{.}&\text{.}&\text{.}&\text{.}&\text{.}&\text{.}\\
|&\text{.}&55&320&891&1408&1155&\text{.}&\text{.}&\text{.}&\text{.}&\text{.}&\text{.}\\
|&\text{.}&\text{.}&\text{.}&\text{.}&\text{.}&\text{.}&1155&1408&891&320&55&\text{.}\\
|&\text{.}&\text{.}&\text{.}&\text{.}&\text{.}&\text{.}&\text{.}&\text{.}&\text{.}&\text{.}&\text{.}&1\\
\end{matrix}$$
\end{small}
\begin{small}
$$\begin{matrix}\hline
|&1&\text{.}&\text{.}&\text{.}&\text{.}&\text{.}&\text{.}&\text{.}&\text{.}&\text{.}&\text{.}&\text{.}\\
|&\text{.}&55&320&900&1488&1470&720&315&80&9&\text{.}&\text{.}\\
|&\text{.}&\text{.}&9&80&315&720&1470&1488&900&320&55&\text{.}\\
|&\text{.}&\text{.}&\text{.}&\text{.}&\text{.}&\text{.}&\text{.}&\text{.}&\text{.}&\text{.}&\text{.}&1\\
\end{matrix}$$
\end{small}

Each of these computations took several minutes.
To consider larger examples, more
efficient algorithms and/or implementations
for computing the singular value decomposition of a complex are needed.
\end{example}


\vskip 0.5in

\vbox{\noindent Author Addresses:\par
\smallskip
\noindent{Danielle A. Brake}\par
\noindent{Department of Mathematics, University of Wisconsin - Eau Claire, Eau Claire WI 54702}\par
\noindent{brakeda@uwec.edu}\par
\smallskip
\noindent{Jonathan D. Hauenstein}\par
\noindent{Department of Applied and Computational Mathematics and Statistics, University of Notre Dame, Notre Dame IN 46556}\par
\noindent{hauenstein@nd.edu}\par
\smallskip
\noindent{Frank-Olaf Schreyer} \par
\noindent{Mathematik und Informatik, Universit\"at des Saarlandes, Campus E2 4, D-66123 Saarbr\"ucken, Germany}\par
\noindent{schreyer@math.uni-sb.de}\par
\smallskip
\noindent{Andrew J. Sommese} \par
\noindent{Department of Applied and Computational Mathematics and Statistics, University of Notre Dame, Notre Dame IN 46556}\par
\noindent{sommese@nd.edu}\par
\smallskip
\noindent{Michael E. Stillman}\par
\noindent{Department of Mathematics, Cornell University, Ithaca NY 14853}\par
\noindent{mike@math.cornell.edu}\par
}

\end{document}